\newcommand{\IN}{{\mathbb{N}}}
\newcommand{\fa}{{\mathfrak{a}}}   
\newcommand{\fp}{{\mathfrak{p}}}     
\newcommand{\fm}{{\mathfrak{m}}}
\newcommand{\cO}{{\mathcal{O}}}
\DeclareMathOperator{\End}{End}               
\DeclareMathOperator{\Hom}{Hom}             
\DeclareMathOperator{\Syl}{Syl}                  
\DeclareMathOperator{\Res}{Res}               
\DeclareMathOperator{\Ind}{Ind}                  
\newcommand{\lconj}[2]{\,^{#1}\!#2}                     
\let\lra=\longrightarrow
\let\wh=\widehat
\newtheorem{thm}{Theorem}[section]
\newtheorem{lem}[thm]{Lemma}
\newtheorem{prop}[thm]{Proposition}
\theoremstyle{definition}
\theoremstyle{remark}
\newtheorem{rem}[thm]{Remark}
\begin{document}


\title[Lifting endo-$p$-permutation modules]
      {Lifting endo-$p$-permutation modules}

\date{\today}

\author{Caroline Lassueur and Jacques Th\'{e}venaz}
\address{Caroline Lassueur\\ FB Mathematik, TU Kaiserslautern, Postfach 3049, 67653 Kaisers\-lautern, Germany.}
\email{lassueur@mathematik.uni-kl.de}
\address{Jacques Th\'{e}venaz\\ EPFL, Section de Math\'{e}matiques, Station 8, CH-1015 Lausanne, Switzerland.}
\email{jacques.thevenaz@epfl.ch}

\keywords{endo-permutation, $p$-permutation, source}

\subjclass[2010]{Primary 20C20.}

\begin{abstract}
We prove that all endo-$p$-permutation modules for a finite group are liftable from characteristic $p>0$ to characteristic~$0$.
\end{abstract}

\maketitle


\pagestyle{myheadings}
\markboth{C. Lassueur and J. Th\'{e}venaz}{Lifting endo-$p$-permutation modules}


\section{Introduction}

Throughout we let $p$ be a prime number and  $G$ be a finite group of order divisible by~$p$. We let $\cO$ denote a complete discrete valuation ring of characteristic~$0$ with a residue field $k:=\cO/\fp$ of positive characteristic $p$, were $\fp=J(\cO)$ is the unique maximal ideal of $\cO$. Moreover, for $R\in\{\cO,k\}$ we consider only finitely generated $RG$-lattices.\par

Amongst  finitely generated $kG$-modules very few classes of modules are known to be liftable to $\cO G$-lattices. Projective $kG$-modules are known to lift uniquely, and more generally,  so do $p$-permutation $kG$-modules (see e.g. \cite[\S 2.6]{BENSON84}).
In the special case where the group $G$ is a $p$-group, Alperin \cite{ALPERINlift} proved  that endo-trivial $kG$-modules are liftable, and Bouc \cite[Corollary 8.5]{BOUC06} observed that so are endo-permutation $kG$-modules as a consequence of their classification. \par

Passing to arbitrary groups, it is proved in \cite{LMS16} that Alperin's result extends to endo-trivial modules over arbitrary groups. 
It is therefore legitimate to ask whether Bouc's result may be extended to arbitrary groups. A natural candidate for such a generalisation is the class of so-called \emph{endo-$p$-permutation} $kG$-modules introduced  by Urfer \cite{URFER07}, which are $kG$-modules whose $k$-endomorphism algebra is a $p$-permutation $kG$-module.
We extend this definition to $\cO G$-lattices and prove that any indecomposable endo-$p$-permutation $kG$-module lifts to an endo-$p$-permutation $\cO G$-lattice with the same vertices.\par

We emphasise that our proof relies on a nontrivial result, namely the lifting of endo-permutation modules, which is a consequence of their classification. Moreover, there are two crucial points to our argument:  the first one is the fact that reduction modulo~$\fp$ applied to the class of endo-$p$-permutation $\cO G$-lattices preserves both indecomposability and vertices, while the second one relies on properties of the $G$-algebra structure of the endomorphism ring of endo-permutation $RG$-lattices.

\section{Endo-$p$-permutation lattices}
Recall that an $\cO G$-lattice is an $\cO G$-module which is free as an $\cO$-module.
For $R\in\{\cO,k\}$ an $RG$-lattice $L$ is called a \emph{$p$-permutation lattice} if $\Res^G_P(L)$ is a permutation $RP$-lattice for every $p$-subgroup $P$ of $G$, or equivalently, if $L$ is isomorphic to a direct summand of  a permutation $RG$-lattice.

Following Urfer \cite{URFER07}, we call an $RG$-lattice $L$  an \emph{endo-$p$-permutation}
$RG$-lattice if its endomorphism algebra $\End_R(L)$ is a $p$-permutation $RG$-lattice, where $\End_R(L)$ is endowed with its natural $RG$-module structure via the action of $G$ by conjugation: 
$$\lconj{g}{\phi}(m)=g\cdot\phi(g^{-1}\cdot m)\quad \forall\,g\in G, \forall\, \phi\in \End_R(L)\text{ and }\forall\,m\in L \,.$$
Equivalently, $L$ is an endo-$p$-permutation $RG$-lattice if and only if
$\Res^G_P(L)$  is an endo-permutation $RP$-lattice for a Sylow $p$-subgroup $P\in\Syl_p(G)$, or also if $\Res^G_Q(L)$ is an endo-permutation $RQ$-lattice for every $p$-subgroup $Q$ of~$G$.

This generalises the notion of an \emph{endo-permutation} $R P$-lattice over a $p$-group~$P$, introduced by Dade in \cite{Dade78a,Dade78b}. In fact an $RP$-lattice is an endo-$p$-permutation $RP$-lattice if and only if it is an endo-permutation lattice. 
An endo-permutation $RP$-lattice $M$ is said to be \emph{capped} if it has at least one indecomposable direct summand with vertex~$P$, and in this case there is in fact a unique isomorphism class of  indecomposable direct summands of $M$ with vertex~$P$. Moreover, considering an equivalence relation called \emph{compatibility} on the class of capped endo-permutation $RP$-lattices gives rise to a finitely generated abelian group $D_{R}(P)$, called the \emph{Dade group} of~$P$, whose multiplication is induced by the tensor product $\otimes_R$. For details, we refer the reader to~\cite{Dade78a} or~\cite[\S27-29]{ThevenazBook}.\par

If $P\leq G$ is a $p$-subgroup, we write $D_R(P)^{G-st}$ for the set of \emph{$G$-stable} elements of $D_R(P)$, i.e. the set of equivalence classes $[L]\in D_R(P)$ such that 
$$\Res^P_{\lconj{x}{P}\cap P}([L])=\Res^{\lconj{x}{P}}_{\lconj{x}{P}\cap P}\circ \, c_x([L])\in D_R(\lconj{x}{P}\cap P) \,,\quad \forall\,x\in G\,,$$
where $c_x$ denotes conjugation by~$x$.

\bigskip 

The following results can be found in Urfer \cite{URFER07} for the case $R=k$, under the additional assumption that $k$ is algebraically closed.
However, it is straightforward to prove that they hold for an arbitrary field~$k$ of characteristic~$p$, and also in case $R=\cO$. 

\begin{rem}\label{rem:stability}
It follows easily from the definitions that the class of endo-$p$-permutation $RG$-lattices is closed under taking direct summands, $R$-duals, tensor products over~$R$, (relative) Heller translates, restriction to a subgroup, and tensor induction to an overgroup. However, this class is not closed under induction, nor under direct sums.
\end{rem}

Two endo-$p$-permutation $RG$-lattices are called \emph{compatible} if their direct sum is an endo-$p$-permutation $RG$-lattice.

\begin{lem}[{}{\cite[Lemma 1.3]{URFER07}}]\label{lem:induction}
Let $H\leq G$ and $L$ be an endo-$p$-permutation $RH$-lattice. Then $\Ind_H^{G}(L)$ is an endo-$p$-permutation $RG$-lattice if and only if $\Res^H_{\lconj{x}{H}\cap H}(L)$ and $\Res^{\lconj{x}{H}}_{\lconj{x}{H}\cap H}(\lconj{x}{L})$ are compatible for each $x\in G$.
\end{lem}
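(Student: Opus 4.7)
The plan is to decompose the endomorphism algebra of $\Ind_H^G(L)$ over $H$-$H$ double cosets of $G$ and to match each summand against one of the claimed compatibility conditions. Starting from $\End_R(\Ind_H^G L) \cong \Ind_H^G(L) \otimes_R \Ind_H^G(L)^*$, I would apply the projection formula $\Ind_H^G(M) \otimes_R N \cong \Ind_H^G(M \otimes_R \Res^G_H(N))$, insert the Mackey decomposition
$$\Res^G_H \Ind_H^G(L) \;\cong\; \bigoplus_{x \in [H\backslash G/H]} \Ind_{\lconj{x}H \cap H}^H \Res^{\lconj{x}H}_{\lconj{x}H \cap H}(\lconj{x}L),$$
dualise (dualising commutes with both induction and restriction for lattices), and then apply the projection formula once more inside $H$. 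This should yield a $G$-equivariant isomorphism
$$\End_R(\Ind_H^G L) \;\cong\; \bigoplus_{x \in [H\backslash G/H]} \Ind_{\lconj{x}H \cap H}^G\Bigl(\Res^H_{\lconj{x}H \cap H}(L) \otimes_R \Res^{\lconj{x}H}_{\lconj{x}H \cap H}(\lconj{x}L)^*\Bigr).$$

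The next step is to observe that each of these summands is a $p$-permutation $RG$-lattice if and only if its inner factor
$$K_x \;:=\; \Res^H_{\lconj{x}H \cap H}(L) \otimes_R \Res^{\lconj{x}H}_{\lconj{x}H \cap H}(\lconj{x}L)^*$$
is a $p$-permutation $R(\lconj{x}H \cap H)$-lattice: induction preserves the $p$-permutation property, while conversely $K_x$ appears as the trivial double-coset summand of $\Res^G_{\lconj{x}H \cap H}\Ind_{\lconj{x}H \cap H}^G(K_x)$, so taking a restriction and then a direct summand brings the property back down.

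Finally, setting $M_1 := \Res^H_{\lconj{x}H \cap H}(L)$ and $M_2 := \Res^{\lconj{x}H}_{\lconj{x}H \cap H}(\lconj{x}L)$, one sees that $K_x \cong \Hom_R(M_2, M_1)$, which sits as a direct summand of
$$\End_R(M_1 \oplus M_2) \;=\; \End_R(M_1) \oplus \End_R(M_2) \oplus \Hom_R(M_1, M_2) \oplus \Hom_R(M_2, M_1).$$
The diagonal blocks $\End_R(M_i)$ are restrictions of $\End_R(L)$ and of $\End_R(\lconj{x}L) \cong \lconj{x}\End_R(L)$, and hence are $p$-permutation by hypothesis together with Remark~\ref{rem:stability}. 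Since $\Hom_R(M_1, M_2) \cong K_x^*$ is $p$-permutation precisely when $K_x$ is (closure of $p$-permutation under $R$-duality), the compatibility of $M_1$ and $M_2$---which by definition means $\End_R(M_1 \oplus M_2)$ is $p$-permutation---becomes equivalent to $K_x$ itself being $p$-permutation. Chaining the equivalences above then yields the lemma.

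The delicate step is the initial $G$-equivariant decomposition of $\End_R(\Ind_H^G L)$: it rests on a careful combination of the projection formula with Mackey's formula, including the twisted actions on the conjugate modules $\lconj{x}L$. Once that identification is in hand, everything afterwards is a direct translation through the closure properties of the $p$-permutation class recorded in Remark~\ref{rem:stability}.
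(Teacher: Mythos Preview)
The paper does not include its own proof of this lemma: it simply cites \cite[Lemma~1.3]{URFER07} for the case $R=k$ and remarks beforehand that the extension to $R=\cO$ is straightforward. Your argument is correct and is the natural one---the Mackey/projection-formula decomposition of $\End_R(\Ind_H^G L)$ into summands $\Ind_{\lconj{x}H\cap H}^G(K_x)$ indexed by double cosets, together with the observation that each such summand is $p$-permutation if and only if the cross-term $K_x\cong\Hom_R(M_2,M_1)$ is, which is precisely the compatibility condition for $M_1$ and~$M_2$. This is essentially the argument one finds in Urfer's paper, so there is nothing to compare.
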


\begin{thm}[{}{\cite[Theorem 1.5]{URFER07}}] \label{thm:Urfer}
An indecomposable $RG$-lattice $L$ with vertex $P$ and $RP$-source $S$ is an endo-$p$-permutation $RG$-lattice if and only if $S$ is a capped  endo-permutation $RP$-lattice such that $[S]\in D_R(P)^{G-st}$.
Moreover, in this case $\Ind_P^G(S)$ is an endo-$p$-permutation $RG$-lattice.
\end{thm}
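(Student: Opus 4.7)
The plan is to verify both directions of the equivalence, together with the ``Moreover'' statement, by combining Lemma~\ref{lem:induction} with a Mackey-type analysis of restrictions of $L$ through the vertex~$P$.

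For the reverse direction together with the ``Moreover'' part, I would start by applying Lemma~\ref{lem:induction} with $H=P$ and the lattice $S$: the induced lattice $\Ind_P^G(S)$ is endo-$p$-permutation if and only if for every $x\in G$ the restrictions $\Res^P_{\lconj{x}{P}\cap P}(S)$ and $\Res^{\lconj{x}{P}}_{\lconj{x}{P}\cap P}(\lconj{x}{S})$ are compatible over $\lconj{x}{P}\cap P$. By the very construction of the Dade group as the set of compatibility classes of capped endo-permutation lattices, the equality $[\Res^P_{\lconj{x}{P}\cap P}(S)]=[\Res^{\lconj{x}{P}}_{\lconj{x}{P}\cap P}(\lconj{x}{S})]$ in $D_R(\lconj{x}{P}\cap P)$ is exactly this compatibility condition (with capless restrictions handled by the zero class). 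Hence the hypothesis $[S]\in D_R(P)^{G-st}$ immediately forces $\Ind_P^G(S)$ to be endo-$p$-permutation, which proves the ``Moreover'' statement. Since $L$ is a direct summand of $\Ind_P^G(S)$ (by the definition of vertex and source) and endo-$p$-permutation lattices are closed under direct summands by Remark~\ref{rem:stability}, $L$ itself is endo-$p$-permutation.

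For the forward direction, I would assume $L$ is endo-$p$-permutation. Then $\Res^G_P(L)$ is endo-permutation over the $p$-group~$P$, and $S$ is a direct summand of it by the standard characterisation of source; by closure under summands, $S$ is endo-permutation, and being indecomposable with vertex $P$ it is capped. To obtain $G$-stability, fix $x\in G$ and set $Q:=\lconj{x}{P}\cap P$. Since $L$ carries a global $G$-action, the isomorphism $L\to\lconj{x}{L}$ given by $m\mapsto x\cdot m$ shows that $\lconj{x}{S}$ is a direct summand of $\Res^G_{\lconj{x}{P}}(L)$. Restricting further to $Q$, both $\Res^P_Q(S)$ and $\Res^{\lconj{x}{P}}_Q(\lconj{x}{S})$ appear as direct summands of the single endo-permutation $RQ$-lattice $\Res^G_Q(L)$. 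Invoking the uniqueness up to isomorphism of the cap of a capped endo-permutation lattice, these two summands share a common cap with $\Res^G_Q(L)$ whenever they are capped, and are both capless otherwise; in either case $[\Res^P_Q(S)]=[\Res^{\lconj{x}{P}}_Q(\lconj{x}{S})]$ in $D_R(Q)$, which is precisely the $G$-stability condition.

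The step I expect to require most care is the Dade-group bookkeeping: one must confirm that the restriction maps $D_R(P)\to D_R(Q)$ behave correctly on capless lattices (represented by the zero class), so that the $G$-stability condition in the statement and the compatibility condition in Lemma~\ref{lem:induction} really do correspond to one another on the nose. Once this convention is in place, the proof is a direct synthesis of Lemma~\ref{lem:induction}, Remark~\ref{rem:stability}, and the cap-uniqueness theorem for endo-permutation lattices over a $p$-group.
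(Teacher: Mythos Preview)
The paper does not give its own proof of this statement: it is quoted verbatim from Urfer \cite[Theorem~1.5]{URFER07}, with only the remark preceding Lemma~\ref{lem:induction} that the argument carries over from algebraically closed~$k$ to arbitrary~$k$ and to $R=\cO$. There is therefore nothing in the paper to compare against.

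That said, your argument is correct and is exactly the natural line of proof (and, in outline, Urfer's). The reverse direction plus the ``Moreover'' clause is precisely Lemma~\ref{lem:induction} specialised to $H=P$, once one recalls that the Dade group is \emph{defined} via compatibility classes, so equality of classes in $D_R(Q)$ is literally the compatibility condition. The forward direction is the observation that both $\Res^P_Q(S)$ and $\Res^{\lconj{x}{P}}_Q(\lconj{x}{S})$ sit inside the single endo-permutation $RQ$-lattice $\Res^G_Q(L)$, hence are compatible (any two summands of an endo-permutation lattice are compatible, since their sum is a summand of $M\oplus M$).

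Your one flagged worry, the ``capless'' bookkeeping, is in fact a non-issue: the restriction of a capped endo-permutation lattice to any subgroup is again capped (this is what makes $\Res^P_Q:D_R(P)\to D_R(Q)$ well defined in the first place; see e.g.\ \cite[\S29]{ThevenazBook}). So the case distinction you introduce never occurs, and the Dade-group equalities and the compatibility conditions of Lemma~\ref{lem:induction} match on the nose without any convention about a zero class.
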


\section{Preserving indecomposability and vertices by reduction modulo~$\fp$}

For an $\cO G$-lattice $L$, the reduction modulo $\fp$ of $L$ is 
$$L/\mathfrak{p}L\cong k\otimes_{\cO}L\,.$$
Note that $k\otimes_{\cO}\End_{\cO}(L)\cong \End_k(L/\mathfrak{p}L)$.
A $kG$-module $M$ is said to be \emph{liftable} if there exists an $\cO G$-lattice $\wh{M}$ such that $M\cong \wh{M}/\mathfrak{p}\wh{M}$.

\begin{lem}\label{lem:iso}
Let $L$ be an endo-$p$-permutation $\cO G$-lattice and write $A:=\End_{\mathcal{O}}(L)$. Then the natural homomorphism $k\otimes_{\cO}A^G\lra (k\otimes_{\cO}A)^G$ is an isomorphism of $k$-algebras.
\end{lem}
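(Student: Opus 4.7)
The plan is to observe that the endo-$p$-permutation hypothesis on $L$ is precisely the statement that $A=\End_\cO(L)$ is a $p$-permutation $\cO G$-lattice, and then to prove the more general assertion that for every $p$-permutation $\cO G$-lattice $M$ the natural $k$-linear map
$$\alpha_M \colon k \otimes_\cO M^G \lra (k \otimes_\cO M)^G,\quad \lambda \otimes m \mapsto \lambda \otimes m$$
is bijective. Specialising to $M=A$ will yield the desired $k$-linear bijection; compatibility with the algebra structure will be handled separately.

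For the general assertion, I would first verify the claim on a transitive permutation lattice $M=\cO[G/H]$. Here both $M^G$ and $(k\otimes_\cO M)^G$ are free of rank one, spanned respectively by the orbit sum $\sigma:=\sum_{xH\in G/H}xH$ and by its reduction $\ol\sigma$, and $\alpha_M$ sends one basis element to the other. Since $(-)^G$ and $k\otimes_\cO-$ are both additive, the claim extends to arbitrary permutation $\cO G$-lattices. If now $M$ is $p$-permutation, then by definition one can write $P\cong M\oplus M'$ for some permutation $\cO G$-lattice $P$ and some $\cO G$-lattice $M'$; the decomposition $\alpha_P=\alpha_M\oplus\alpha_{M'}$ and the already established fact that $\alpha_P$ is a bijection force both $\alpha_M$ and $\alpha_{M'}$ to be bijections.

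Finally, to upgrade the $k$-linear bijection $\alpha_A$ to a $k$-algebra isomorphism, I would invoke the canonical $k$-algebra isomorphism $k\otimes_\cO\End_\cO(L)\cong\End_k(k\otimes_\cO L)$, under which $\alpha_A$ identifies with the inclusion, inside all $kG$-equivariant endomorphisms of $k\otimes_\cO L$, of those coming by reduction modulo~$\fp$ from $\cO G$-equivariant endomorphisms of $L$; this map is manifestly multiplicative and unital. There is no real obstacle in this proof: the essential content is to isolate the right general lemma about $p$-permutation $\cO G$-lattices, which then reduces by additivity to the trivial transitive permutation case, while the endo-$p$-permutation hypothesis on $L$ enters only at the start to ensure that $A$ itself is a $p$-permutation $\cO G$-lattice.
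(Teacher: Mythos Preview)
Your proposal is correct and follows essentially the same route as the paper: reduce to the transitive permutation case $\cO[G/H]$, compute fixed points explicitly there, and extend by additivity to all $p$-permutation lattices. You are in fact slightly more careful than the paper in two places---you spell out the direct-summand argument needed to pass from permutation to $p$-permutation lattices, and you explicitly verify that the natural map respects the algebra structure---but the underlying strategy is identical.
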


\begin{proof}
To begin with, consider a transitive permutation $\cO G$-lattice $U=\Ind_{Q}^{G}(\cO)$.
Then $Q\leq G$ is the stabiliser of $x=1_G\otimes 1_{\cO}$, so that
$$\{gx\mid g\in[G/Q]\}$$
is a $G$-invariant $\cO$-basis of $U$ and $U^G\cong \cO(\sum_{g\in[G/Q]}gx)$. It follows that 
$$\{1_k\otimes gx\mid g\in[G/Q]\}$$
is a $G$-invariant $k$-basis of $k\otimes_{\cO} U$ and 
$(k\otimes_{\cO}U)^G=k(\sum_{g\in[G/Q]}1\otimes gx)$.
Therefore the restriction of the canonical surjection $U\lra k\otimes_{\cO} U$ to the submodule $U^G$ of $G$-fixed points of $U$ has image $(k\otimes_{\cO} U)^G$ with kernel equal to $\fp U^G$. Hence the canonical homomorphism 
$$k\otimes_{\cO}U^G\lra(k\otimes_{\cO}U)^G$$
 is an isomorphism.
Because taking fixed points commutes with direct sums, the latter isomorphism holds as well for every $p$-permutation $\cO G$-lattice $U$.
Therefore, writing $A=\bigoplus_{i=1}^{m}U_i$ as a direct sum of indecomposable $p$-permutation $\cO G$-lattices, we obtain that the canonical homomorphism
$$k\otimes_{\cO}A^G\cong  \bigoplus_{i=1}^{m} \,k\otimes_{\cO} U_i^G\quad \lra \quad  \bigoplus_{i=1}^{m}\,(k\otimes_{\cO}U_i)^G\cong (k\otimes_{\cO}A)^G$$
is an isomorphism.
\end{proof}

The following characterization of vertices is well-known, but we include a proof for completeness.

\begin{lem}\label{lem:vertex}
Let $R\in\{\cO,k\}$ and let $L$ be an indecomposable $RG$-lattice.
Let $L^\vee=\Hom_R(L,R)$ denote the $R$-dual of~$L$ and let
$$\End_R(L)\cong L\otimes_RL^{\vee} \cong U_1\oplus\cdots\oplus U_n$$
be a decomposition of~$L\otimes_RL^{\vee}$ into indecomposable summands.
Then a $p$-subgroup $P$ of~$G$ is a vertex of~$L$ if and only if every $U_i$ has a vertex contained in~$P$ and one of them has vertex~$P$.
\end{lem}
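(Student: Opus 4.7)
The plan is to prove both directions using the observation that $L$ is a direct summand of $\End_R(L)\otimes_R L$ as an $RG$-module. Indeed, writing $L^\vee = \Hom_R(L,R)$, the coevaluation map $R\to L\otimes_R L^\vee,\ 1\mapsto \sum e_i\otimes e_i^\vee$, and the evaluation map $L^\vee\otimes_R L\to R,\ \phi\otimes\ell\mapsto\phi(\ell)$, are both $RG$-linear, and their composition
$$L\cong R\otimes_R L \lra L\otimes_R L^\vee\otimes_R L \lra L\otimes_R R \cong L$$
equals $\Id_L$. Hence $L$ is an $RG$-direct summand of $\End_R(L)\otimes_R L\cong\bigoplus_{i=1}^n(U_i\otimes_R L)$.

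Suppose first that $P$ is a vertex of $L$. Since $L$ is $P$-projective, so is $L\otimes_R L^\vee\cong\End_R(L)$ (relative projectivity is preserved under tensor product with any $RG$-module), and hence each direct summand $U_i$ is $P$-projective, i.e.\ has a vertex contained in $P$ (up to $G$-conjugation). Now suppose for contradiction that no $U_i$ has vertex $G$-conjugate to $P$. Then for each $i$ a vertex $V_i$ of $U_i$ may be chosen with $V_i < P$ strictly, so each $U_i$ is $V_i$-projective. Then each $U_i\otimes_R L$ is $V_i$-projective, hence a direct summand of $\Ind_{V_i}^G\Res^G_{V_i}(U_i\otimes_R L)$. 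Since $L$ is indecomposable and is a summand of $\bigoplus_{i=1}^n\Ind_{V_i}^G\Res^G_{V_i}(U_i\otimes_R L)$, the Krull--Schmidt theorem forces $L$ to be a summand of some single $\Ind_{V_j}^G\Res^G_{V_j}(U_j\otimes_R L)$, showing that $L$ is $V_j$-projective with $V_j<P$; this contradicts $P$ being a vertex of $L$.

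Conversely, assume every $U_i$ has a vertex contained in $P$ and some $U_i$ has vertex $G$-conjugate to~$P$. Then each $U_i$ is $P$-projective, so each $U_i\otimes_R L$ is $P$-projective, and hence so is the direct sum $\bigoplus_i(U_i\otimes_R L)$. As $L$ is a direct summand of this sum, $L$ is $P$-projective, i.e.\ has a vertex contained in $P$. If the vertex of $L$ were some $P'<P$ strictly, then by the forward direction applied to $P'$ every $U_i$ would have a vertex $G$-subconjugate to $P'$, contradicting the assumption that some $U_i$ has vertex $G$-conjugate to~$P$. Hence $P$ itself is a vertex of $L$.

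The only mildly subtle step is the application of Krull--Schmidt in the first paragraph: the indecomposability of $L$ is what allows us to conclude vertex-projectivity from a single $j$ rather than only from the whole sum. Everything else reduces to the standard fact that relative projectivity passes to summands, to tensor products, and is preserved under induction of a restricted module.
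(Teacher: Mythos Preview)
Your proof is correct and follows essentially the same approach as the paper's: both exhibit $L$ as a direct summand of $L\otimes_R L^\vee\otimes_R L$ via the evaluation/coevaluation pairing, use this together with Krull--Schmidt to show some $U_i$ must have full vertex~$P$, and deduce the converse by applying the forward direction to an actual vertex of~$L$. The only differences are cosmetic (you phrase the splitting via coevaluation rather than an explicit section of the evaluation map, and you pass through $\Ind_{V_i}^G\Res^G_{V_i}$ explicitly where the paper argues directly with relative projectivity of $U_i\otimes_R L$).
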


\begin{proof}
Suppose $L$ has vertex~$P$. Then $L$ is projective relative to~$P$ and, by tensoring with~$L^\vee$, we see that $L\otimes_RL^{\vee}$ is projective relative to~$P$, and therefore so are $U_1,\ldots, U_n$. In other words, $P$ contains a vertex of $U_i$ for each $1\leq i\leq n$. 
Now $L$ is isomorphic to a direct summand of $L\otimes_RL^{\vee}\otimes_RL$ because the evaluation map
$$L\otimes_RL^{\vee}\otimes_RL \longrightarrow L \,, \qquad x\otimes \psi \otimes y \mapsto \psi(x)y$$
splits via $y\mapsto \sum_{i=1}^n y\otimes v_i^\vee\otimes v_i$, where $\{v_1, \ldots,v_n\}$ is an $R$-basis of~$L$ and $\{v_1^\vee, \ldots,v_n^\vee\}$ is the dual basis. Therefore $L$ is isomorphic to a direct summand of some $U_i\otimes_RL$ (by the Krull-Schmidt theorem).
If, for each $1\leq i \leq n$, a vertex of~$U_i$ was strictly contained in~$P$, then $U_i\otimes_RL$ would be projective relative to a proper subgroup of~$P$, hence the direct summand $L$ would also be projective relative to a proper subgroup of~$P$, a contradiction. This proves that, for some~$i$, a vertex of~$U_i$ is equal to~$P$.\par

Suppose conversely that every $U_i$ has a vertex contained in~$P$ and one of them has vertex~$P$. Let $Q$ be a vertex of~$L$. By the first part of the proof, every $U_i$ has a vertex contained in~$Q$ and one of them has vertex~$Q$. This forces $Q$ to be equal to~$P$ up to conjugation.
\end{proof}

\begin{prop}\label{prop:indec}
If $L$ is an indecomposable endo-$p$-permutation $\cO G$-lattice with vertex~$P\leq G$, then  $L/\fp L$ is an indecomposable endo-$p$-permutation $kG$-module with vertex~$P$.
\end{prop}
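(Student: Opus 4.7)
The plan is to verify three assertions in order: that $L/\fp L$ is endo-$p$-permutation, that it is indecomposable, and that its vertex is $P$.

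The first assertion is essentially formal. Since $L$ is $\cO$-free of finite rank, the canonical map $k\otimes_\cO\End_\cO(L)\to\End_k(L/\fp L)$ is an isomorphism of $kG$-modules. The reduction modulo $\fp$ of any $p$-permutation $\cO G$-lattice is a $p$-permutation $kG$-module (permutation lattices reduce to permutation modules, and the $p$-permutation property passes to summands), so $\End_k(L/\fp L)$ is $p$-permutation and $L/\fp L$ is endo-$p$-permutation.

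For indecomposability I would exploit Lemma~\ref{lem:iso}. Setting $A:=\End_\cO(L)$, the ring $A^G=\End_{\cO G}(L)$ is a local $\cO$-algebra since $L$ is indecomposable and $\cO$ is a complete discrete valuation ring; because $A^G$ is finitely generated over $\cO$, the ideal $\fp A^G$ lies in its Jacobson radical, so the quotient $k\otimes_\cO A^G$ is still local. Lemma~\ref{lem:iso} identifies this local ring with $(k\otimes_\cO A)^G\cong \End_{kG}(L/\fp L)$, forcing the latter to be local and hence $L/\fp L$ to be indecomposable. I expect this to be the main obstacle of the whole argument: reduction modulo $\fp$ does not preserve indecomposability in general for $\cO G$-lattices, and it is precisely the endo-$p$-permutation hypothesis on $L$, funnelled through Lemma~\ref{lem:iso}, that makes the argument work here.

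For the vertex, I would apply Lemma~\ref{lem:vertex} on both sides of the reduction. Decomposing
$$\End_\cO(L)\cong L\otimes_\cO L^\vee\cong U_1\oplus\cdots\oplus U_n$$
into indecomposables, Lemma~\ref{lem:vertex} applied over $\cO$ tells us that each $U_i$ has a vertex contained in $P$ and that at least one of them has vertex exactly $P$. Each $U_i$ is a $p$-permutation $\cO G$-lattice as a direct summand of $\End_\cO(L)$, so by the uniqueness of the lifting of $p$-permutation modules its reduction $U_i/\fp U_i$ is an indecomposable $p$-permutation $kG$-module with the same vertex as $U_i$. This yields an indecomposable decomposition of $\End_k(L/\fp L)$ with identical vertex data, and Lemma~\ref{lem:vertex} applied over $k$ (now legitimate since $L/\fp L$ is already known to be indecomposable) gives that the vertex of $L/\fp L$ is $P$, completing the proof.
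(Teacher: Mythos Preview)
Your proposal is correct and follows essentially the same route as the paper: both use Lemma~\ref{lem:iso} together with the observation that $\fp A^G\subseteq J(A^G)$ to transport locality of $A^G$ to $(k\otimes_\cO A)^G$, and both compute the vertex by decomposing $\End_\cO(L)$ into indecomposable $p$-permutation summands and invoking Lemma~\ref{lem:vertex} before and after reduction. Your indecomposability step is phrased slightly more directly (a quotient of a local ring by an ideal in its radical is local) where the paper spells out the correspondence of radicals, but the substance is identical.
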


\begin{proof}
Set $A:=\End_{\cO}(L)$, so that  $A^G=\End_{\cO G}(L)$. First we prove that $\End_{kG}(L/\fp L)=(k\otimes_{\cO} A)^G$ is a local algebra. Write $\psi:A^G\lra A^G/\mathfrak{p}A^G$ for the canonical homomorphism.
By Nakayama's Lemma $\fp A^G\subseteq J(A^G)$, so that any maximal left ideal of $A^G$ contains $\fp A^G$. Therefore  
\begin{equation*}
\begin{split}
\psi^{-1}(J(A^G/\mathfrak{p}A^G))&=\psi^{-1}\left(\bigcap_{\fm\in \text{Maxl}(A^G/\mathfrak{p}A^G)} \fm \right) = \bigcap_{\substack{\fa\in \text{Maxl}(A^G)\\ \fa\supseteq \fp A^G}} \fa=J(A^G)\,,
\end{split}
\end{equation*}
where $\text{Maxl}$ denotes the set of  maximal left ideals of the considered ring.
Thus $\psi$ induces an isomorphism  $A^G/J(A^G)\cong (k\otimes_{\cO} A^G) / J(k\otimes_{\cO}A^G)$.
Now $k\otimes_{\cO}A^G\cong (k\otimes_{\cO}A)^G$ as $k$-algebras, by Lemma~\ref{lem:iso}.
Therefore it follows that
$$\End_{kG}(L/\fp L)/J(\End_{kG}(L/\fp L)) \cong
(k\otimes_{\cO} A)^G / J((k\otimes_{\cO}A)^G)\cong A^G/J(A^G) \,.$$
This is a skew-field since we assume that $L$ is indecomposable. Hence $L/\fp L$ is indecomposable.
\par

For the second claim, let $P$ be a vertex of~$L$. 
Let $L^\vee$ denote the $\cO$-dual of~$L$ and consider  a decomposition of $\End_{\cO}(L)$ into indecomposable summands
$$\End_{\cO}(L)\cong L\otimes_{\cO}L^{\vee} \cong U_1\oplus\cdots\oplus U_n\,.$$
Then there is also a decomposition
$$\End_k(L/\fp L)\cong k\otimes_{\cO} \End_{\cO}(L)\cong U_1/\fp U_1\oplus\cdots\oplus U_n/\fp U_n\,.$$
Since $L$ is an endo-$p$-permutation $\cO G$-lattice, $U_i$ is a $p$-permutation module
for each $1\leq i \leq n$. Therefore the module $U_i/\fp U_i$ is indecomposable and the vertices of $U_i$ and $U_i/\fp U_i$ are the same (see~\cite[Proposition~27.11]{ThevenazBook}).
By Lemma~\ref{lem:vertex}, every $U_i$ has a vertex contained in~$P$ and one of them has vertex~$P$.
Therefore every $U_i/\fp U_i$ has a vertex contained in~$P$ and one of them has vertex~$P$.
By Lemma~\ref{lem:vertex} again, $P$ is a vertex of~$L/\fp L$.
\end{proof}

\section{Lifting endo-$p$-permutation $kG$-modules}

We are going to use the fact that the sources of endo-$p$-permutation $kG$-modules are liftable. However, a random lift of the sources will not suffice and our next lemma deals with this question.

\begin{lem}\label{lem:G-st} 
Let $P$ be a  $p$-group. If $S$ is an indecomposable endo-permutation $kP$-module with vertex~$P$ such that $[S]\in D_k(P)^{G-st}$, then there exists an endo-permutation $\cO P$-lattice $\wh{S}$ lifting $S$ such that  $[\wh{S}]\in D_{\cO}(P)^{G-st}$.
\end{lem}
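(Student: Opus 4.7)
The plan is to start from an arbitrary endo-permutation lift of $S$ obtained via Bouc's theorem, and then to adjust it by an element of the kernel of the reduction map so as to enforce the $G$-stability condition.

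First, by Bouc's lifting theorem for endo-permutation modules over a $p$-group \cite[Corollary 8.5]{BOUC06}, one obtains an indecomposable endo-permutation $\cO P$-lattice $\wh{S}_0$ of vertex~$P$ with $\wh{S}_0/\fp\wh{S}_0\cong S$. The reduction map $\pi_Q: D_{\cO}(Q)\to D_k(Q)$ is a natural surjective homomorphism of Dade groups for every subgroup $Q\leq P$, and it commutes with the restriction maps $\Res^P_Q$ and with conjugation $c_x$ by elements $x\in G$. Hence for each $x\in G$ and $Q:=\lconj{x}P\cap P$, the defect
$$\delta(x):=\Res^P_Q[\wh{S}_0]-\Res^{\lconj{x}P}_Q\bigl(c_x([\wh{S}_0])\bigr)\in D_{\cO}(Q)$$
satisfies $\pi_Q(\delta(x))=0$, by the $G$-stability of $[S]$ combined with the naturality of $\pi$. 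Thus $\wh{S}_0$ is $G$-stable only up to the kernel of reduction.

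The task is now to produce a class $[K]\in\ker(\pi_P)$ whose restrictions and $G$-conjugates cancel all of the $\delta(x)$ simultaneously, so that $\wh{S}:=\wh{S}_0\otimes_\cO K$ remains a lift of $S$ while being $G$-stable. I would exploit the fact that $\ker(\pi_P)$ is generated by classes of one-dimensional $\cO P$-lattices $\cO_\chi$ with $\chi\in\Hom(P,1+\fp)$, whose behaviour under restriction and conjugation is explicit: $\Res^P_Q[\cO_\chi]=[\cO_{\chi|_Q}]$ and $c_x[\cO_\chi]=[\cO_{\chi\circ c_{x^{-1}}}]$. Matching the defects then amounts to solving a system of equations in characters.

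The main obstacle is showing that this system is indeed solvable, i.e.\ that the family $\{\delta(x)\}_{x\in G}$ can actually be written as the coboundary of a single character $\chi\in\Hom(P,1+\fp)$. The required cocycle-type consistency of the $\delta(x)$'s is inherited from the $G$-stability of $[S]$ over~$k$, but extracting the character~$\chi$ from this consistency, using the explicit description of $\Hom(P,1+\fp)$ together with its $G$-module structure, is the most delicate step of the proof and the only point where the argument goes beyond formal manipulations with the reduction map.
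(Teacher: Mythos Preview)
Your setup is correct up to the final step: starting from an arbitrary Bouc lift $\wh{S}_0$, the defects $\delta(x)$ do lie in $\ker(\pi_Q)$, this kernel does identify with $\Hom(Q,1+\fp)$, and the problem does reduce to producing a single $\chi\in\Hom(P,1+\fp)$ whose restrictions and conjugates cancel all the $\delta(x)$. However, you do not actually carry this out; you label it the delicate step and stop. As written the proposal is therefore incomplete, and it is not clear how you would extract~$\chi$ from the cocycle-type consistency alone without some further input.

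The paper closes exactly this gap, but by sidestepping the cocycle computation entirely. Since $\dim_\cO\wh{S}_0$ is prime to~$p$ (\cite[Corollary~28.11]{ThevenazBook}), among all twists $\wh{S}_0\otimes_\cO\cO_\chi$ there is a unique one with trivial determinant (\cite[Lemma~28.1]{ThevenazBook}); take $\wh{S}$ to be that one. The determinant is natural with respect to restriction and conjugation, so for every $p$-subgroup~$Q$ the determinant-$1$ lift of a given class in $D_k(Q)$ is unique and functorial in~$Q$. Hence both sides of the $G$-stability equation over~$\cO$, being determinant-$1$ lifts of the same class in $D_k(\lconj{x}P\cap P)$, must coincide. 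In your language, the determinant-$1$ condition defines a natural group-theoretic section $D_k(Q)\to D_\cO(Q)$ of the reduction map; once $\wh{S}$ is chosen on this section, all of your defects $\delta(x)$ vanish automatically and there is no system of character equations to solve. This is precisely the missing ingredient in your sketch.
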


\begin{proof}
As a consequence of the classification of endo-permutation modules, Bouc proved that every endo-permutation $kP$-module is liftable \cite[Corollary 8.5]{BOUC06}. Therefore $S$ is liftable to an $\cO P$-lattice $\wh{S}$, i.e. $\wh{S}/\fp\wh{S}\cong S$. Note that $\wh{S}$ is not unique because $\wh{S}\otimes_{\cO}L$ also lifts~$S$ for any one-dimensional $\cO P$-lattice~$L$. This is because $L/\fp L\cong k$ since the trivial module~$k$ is the only one-dimensional $kP$-module up to isomorphism. However, the lifted $P$-algebra $\End_{\cO}(\wh{S})$ is unique up to isomorphism and we can choose $\wh{S}$ to be the unique ${\cO}P$-lattice with determinant~$1$ which lifts~$S$ (see \cite[Lemma~28.1]{ThevenazBook}). This choice of an $\cO P$-lattice with determinant~1 is made possible because the dimension of~$\wh{S}$ is prime to~$p$ (see \cite[Corollary~28.11]{ThevenazBook}).\par

In order to prove that $[\wh{S}]$ is $G$-stable in the Dade group, we note that the determinant~1 is preserved by conjugation and by restriction. Therefore, the equality
$$\Res^P_{\lconj{x}{P}\cap P}([S])=\Res^{\lconj{x}{P}}_{\lconj{x}{P}\cap P}\circ \, c_x([S])\in D_k(\lconj{x}{P}\cap P) \,,\quad \forall\,x\in G$$
implies an equality for the unique lifts with determinant~1
$$\Res^P_{\lconj{x}{P}\cap P}([\wh{S}])=\Res^{\lconj{x}{P}}_{\lconj{x}{P}\cap P}\circ \, c_x([\wh{S}])\in D_{\cO}(\lconj{x}{P}\cap P) \,,\quad \forall\,x\in G\,.$$
This proves that $[\wh{S}]\in D_{\cO}(P)^{G-st}$, completing the proof.
\end{proof}

\begin{thm}
Let $M$ be an indecomposable endo-$p$-permutation $kG$-module, and let $P\leq G$ be a vertex of $M$. Then there exists an indecomposable  endo-$p$-permutation $\cO G$-lattice $\wh{M}$ with vertex $P$ such that $\wh{M}/\fp\wh{M}\cong M$.
\end{thm}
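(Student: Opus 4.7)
The plan is to combine Urfer's characterisation (Theorem~2.3) with the refined lifting statement from Lemma~4.1 and the reduction-mod-$\fp$ properties of Proposition~3.2, then use Krull--Schmidt on an induced lattice to extract the desired lift.

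First I would record the input. Since $M$ is indecomposable endo-$p$-permutation with vertex $P$, Theorem~\ref{thm:Urfer} gives an indecomposable endo-permutation $kP$-source $S$ such that $[S]\in D_k(P)^{G\text{-st}}$, and moreover $\Ind_P^G(S)$ is again endo-$p$-permutation. Lemma~\ref{lem:G-st} then produces an endo-permutation $\cO P$-lattice $\wh{S}$ with $\wh{S}/\fp\wh{S}\cong S$ and with $[\wh{S}]\in D_\cO(P)^{G\text{-st}}$. Applying Theorem~\ref{thm:Urfer} again, this time over $\cO$, we obtain that $\Ind_P^G(\wh{S})$ is an endo-$p$-permutation $\cO G$-lattice. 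This is the candidate ambient lattice from which we will carve out $\wh{M}$.

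Next I would decompose $\Ind_P^G(\wh{S})=\wh{L}_1\oplus\cdots\oplus\wh{L}_r$ into indecomposable $\cO G$-lattices. By Remark~\ref{rem:stability}, each $\wh{L}_i$ is itself an indecomposable endo-$p$-permutation $\cO G$-lattice, so Proposition~\ref{prop:indec} applies: each reduction $\wh{L}_i/\fp\wh{L}_i$ is an indecomposable endo-$p$-permutation $kG$-module whose vertex coincides with that of $\wh{L}_i$. Reducing the direct sum decomposition modulo $\fp$ yields
\[
\Ind_P^G(S)\;\cong\;\wh{L}_1/\fp\wh{L}_1\oplus\cdots\oplus\wh{L}_r/\fp\wh{L}_r,
\]
which is therefore a Krull--Schmidt decomposition into indecomposable $kG$-modules.

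Now I would identify $M$ inside this decomposition. Because $S$ is the $kP$-source of $M$, the module $M$ is isomorphic to a direct summand of $\Ind_P^G(S)$; by Krull--Schmidt there exists some index $i$ with $M\cong \wh{L}_i/\fp\wh{L}_i$. Setting $\wh{M}:=\wh{L}_i$ gives an indecomposable endo-$p$-permutation $\cO G$-lattice with $\wh{M}/\fp\wh{M}\cong M$. Finally, since $\wh{M}$ and $\wh{M}/\fp\wh{M}\cong M$ have the same vertex by Proposition~\ref{prop:indec}, the vertex of $\wh{M}$ is indeed $P$.

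The main conceptual obstacle is the choice of lift of the source: a naive lift of $S$ need not be $G$-stable in the Dade group over $\cO$, and without $G$-stability Theorem~\ref{thm:Urfer} does not guarantee that the induced lattice $\Ind_P^G(\wh{S})$ is endo-$p$-permutation. The determinant-$1$ normalisation used in Lemma~\ref{lem:G-st} removes exactly this ambiguity, and once that is in place the remainder of the argument is a routine extraction of an indecomposable summand via Krull--Schmidt.
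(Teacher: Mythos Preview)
Your proposal is correct and follows essentially the same route as the paper's own proof: lift the source $G$-stably via Lemma~\ref{lem:G-st}, induce to~$G$, decompose over~$\cO$, reduce modulo~$\fp$ using Proposition~\ref{prop:indec} to get a Krull--Schmidt decomposition of $\Ind_P^G(S)$, and pick out the summand matching~$M$. The only cosmetic difference is that the paper invokes Lemma~\ref{lem:induction} (rather than the ``moreover'' clause of Theorem~\ref{thm:Urfer}) to see that $\Ind_P^G(\wh{S})$ is endo-$p$-permutation, and you make the vertex-$P$ conclusion for $\wh{M}$ explicit at the end where the paper leaves it implicit in the phrase ``preserves the vertices''.
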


\begin{proof}
Let $P$ be a vertex of $M$ and $S$ be a $kP$-source of $M$.  
By Theorem~\ref{thm:Urfer}, $S$ is a capped endo-permutation $kP$-module such that $[S]\in D_k(P)^{G-st}$. By  Lemma~\ref{lem:G-st}, $S$ lifts to an endo-permutation $\cO P$-lattice $\wh{S}$ such that  $[\wh{S}]\in D_{\cO}(P)^{G-st}$. Moreover $\Ind_P^G(\wh{S})$ is an endo-$p$-permutation $\cO G$-lattice, by Lemma~\ref{lem:induction} and the fact that $[\wh{S}]$ is $G$-stable.
Now consider a decomposition of $\Ind_P^G(\wh{S})$ into indecomposable summands
$$\Ind_P^G(\wh{S})=L_1\oplus\cdots\oplus L_s \quad (s\in\IN)\,.$$
By Remark~\ref{rem:stability}, each of the lattices $L_i$ $(1\leq i\leq s)$ is an endo-$p$-permutation $\cO G$-lattice.
Then, by Proposition~\ref{prop:indec}, 
$$\Ind_P^G(S)\cong \Ind_P^G(\wh{S})/\fp\Ind_P^G(\wh{S})
\cong L_1/\fp L_1\oplus\cdots\oplus L_s/\fp L_s$$
is a decomposition of $\Ind_P^G(S)$ into indecomposable summands which preserves the vertices of the indecomposable summands.
Because $S$ is a source of~$M$, there exists an index $1\leq i\leq s$ such that $M\cong L_i/\fp L_i$.
Then $\wh M:=L_i$ lifts~$M$.
\end{proof}

\bigskip

\begin{rem}
In \cite{BK06}, Boltje and K\"{u}lshammer consider the class of \emph{modules with an endo-permutation source}, which also play a role in the study of Morita equivalences, as observed by Puig \cite{Puig}.
In recent work of Kessar and Linckelmann \cite{KessLinck17}, it is proved that in odd characteristic any Morita equivalence with an endo-permutation source is liftable from $k$ to~$\cO$, under the assumption that $k$ is algebraically closed. \par

As a typical example, we remark that simple modules for $p$-soluble groups are known to be instances of modules with an endo-permutation source (see \cite[Theorem~30.5]{ThevenazBook}) and they are also known to be liftable to characteristic zero (Fong-Swan Theorem). Urfer proved in his Ph.D. thesis \cite{URFER06} that such simple modules are endo-$p$-permutation modules in case they are not induced from proper subgroups, but in general they need not be endo-$p$-permutation. \par

One may ask whether our result extends to $kG$-modules with an endo-permutation source, i.e. whose class in the Dade group is not necessarily $G$-stable. We do not have an answer to this question.
Our proof that endo-$p$-permutation modules are liftable to characteristic zero does not seem to extend to this larger class of modules, because it relies on the fact that the endomorphism algebra is a $p$-permutation module.
\end{rem}

\bigskip

\noindent\textbf{Acknowledgments.} The authors are grateful to Nadia Mazza for useful discussions.

\bigskip\bigskip


\end{document}